\def\ocirc#1{\ifmmode\setbox0=\hbox{$#1$}\dimen0=\ht0
\advance\dimen0 by1pt\rlap{\hbox to\wd0{\hss\raise\dimen0
\hbox{\hskip.2em$\scriptscriptstyle\circ$}\hss}}#1\else
{\accent"17 #1}\fi}
\newtheorem{theorem}{Theorem}
\newtheorem{lemma}[theorem]{Lemma}
\newtheorem{definition}[theorem]{Definition}
\newcommand{\R}{\mathbb{R}}
\newcommand{\T}{\mathbb{T}}
\begin{document}

\title[Existence of solutions for dynamic inclusions on time scales]{%
Existence of solutions for dynamic inclusions on time scales via duality}

\thanks{Submitted 29-Aug-2011; revised 09-Jan-2012;
accepted 21-Jan-2012; for publication in \emph{Applied Mathematics Letters}.}

\author[E. Girejko]{Ewa Girejko}

\address{Ewa Girejko\newline
\indent Faculty of Computer Science, Bia{\l}ystok University of Technology,
15-351 Bia\l ystok, Poland}

\email{e.girejko@pb.edu.pl}


\author[D.F.M. Torres]{Delfim F. M. Torres}

\address{Delfim F. M. Torres\newline
\indent Center for Research and Development in Mathematics and Applications\newline
\indent Department of Mathematics, University of Aveiro, 3810-193 Aveiro, Portugal}

\email{delfim@ua.pt}


\begin{abstract}
The problem of existence of solutions to nabla differential equations
and nabla differential inclusions on time scales is considered.
Under a special form of the set-valued constraint map,
sufficient conditions for the existence of at least one solution,
that stay in the constraint set, are derived.
\end{abstract}


\keywords{Time scale calculus; dynamic equations and inclusions;
existence of solutions; viability; control systems on time scales.}

\subjclass[2010]{34A12; 34N05}

\maketitle


\section{Introduction}

Recently, motivated by applications in population dynamics and economics,
the theories of differential and difference equations
have been unified and extended into the study of dynamic equations
on time scales \cite{A:B:O:P:02,Atici,A:H,MalTor:comp}.
The new theory illustrates and explains the discrepancies
between continuous and discrete-time results, and provides more general
results on an arbitrary closed subset of the real numbers \cite{MR2410768,MR2445270,MR2604248}.
In this way, results apply not only to the set of reals or set of integers,
but to more general time scales such as the Cantor or quantum sets
\cite{B:P:01,B:P:03,R:D,Lak:book,MR2733985}.

In the theory of differential equations,
a useful technique to prove existence of solutions
consists in proving necessary and/or sufficient
conditions under which at least one trajectory of a vector field,
starting at a point of a certain constraint set, stays in that set \cite{Aubin}.
Existence results of this kind for dynamic equations on time scales
are a rarity. We are only aware of \cite{D:09},
where a generalization of Wa\.zewski's
method for systems of delta dynamic equations
on time scales is obtained. The main result
of \cite{D:09} is reviewed in Section~\ref{sec:prelim}
(Theorem~\ref{th:Diblik}). For related studies on dynamic inclusions
on time scales, the reader is referred to \cite{MR2797040,MR2050226,MR2217094}.

Here we formulate sufficient conditions for nabla dynamic equations
on an arbitrary time scale, guaranteeing the existence
of at least one trajectory of a vector field starting
at a point of a set $K$ (constraint set) and staying in that set
(Theorem~\ref{th:Diblik:nabla}). Moreover, using a Filippov theorem
on time scales \cite{P:T:10} and its dual version in the sense
of \cite{C:10} (see also \cite{MalTor,MR2794990,PawTor}),
we extend the results to special cases of nabla
(Theorem~\ref{th:viab:nabla}) or delta differential inclusions
(Theorem~\ref{th:viab:delta}). We end with an example.


\section{Preliminaries}
\label{sec:prelim}

For an introduction to the theory of time scales we refer the reader to
\cite{A:B:O:P:02,B:P:01,B:P:03,SH,Lak:book}. Here we just recall
some necessary and recent results of \cite{C:10,D:09,P:T:10}.
Throughout the paper we assume $\T$ to be a given time scale with
jump functions $\sigma$ and $\rho$, forward graininess $\mu$
and backward graininess $\nu$, differential operators
$\Delta$ and $\nabla$, and where $\inf\T:=a$, $\sup\T:=b$,
and $[a,b]_{\T}:=[a,b]\cap\T$.


\subsection{Caputo's duality approach}

The notion of dual time scale was introduced in \cite{C:10}
and has shown to be a very useful concept in control theory \cite{PawTor}
and the calculus of variations \cite{MalTor,DNA}.

\begin{definition}[Dual time scale]
Given a time scale $\T$, we define the \emph{dual time scale} $\T^{\star}$ by
$\T^{\star}:= \{ s\in \R | -s\in \T\}$.
\end{definition}

\begin{lemma}[\cite{C:10}]
If $a,b\in\T$ with $a<b$,
then $\left([a,b]_{\T}\right)^{\star}=[-b,-a]_{\T^{\star}}$.
\end{lemma}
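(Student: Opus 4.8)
The plan is to chase the definitions. The $\star$ operation, when applied to a subset $A\subseteq\R$, is to be read as the induced set map $A^{\star}:=\{s\in\R\mid -s\in A\}$ (equivalently $A^{\star}=\{-t\mid t\in A\}$), so that the dual time scale $\T^{\star}$ is simply the special case $A=\T$. First I would write out, directly from this definition together with $[a,b]_{\T}=[a,b]\cap\T$,
\[
\left([a,b]_{\T}\right)^{\star}=\{s\in\R\mid -s\in[a,b]_{\T}\}=\{s\in\R\mid -s\in[a,b]\ \text{and}\ -s\in\T\}.
\]

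The second step is to treat the two conditions separately. For the order condition, $-s\in[a,b]$ means $a\le -s\le b$, which after multiplying by $-1$ is equivalent to $-b\le s\le -a$, i.e.\ $s\in[-b,-a]$; since $a<b$ we have $-b<-a$, so $[-b,-a]$ is a genuine nondegenerate interval of the same type, and the negation is exactly what swaps the roles of the endpoints. For the time-scale condition, the very definition of $\T^{\star}$ gives that $-s\in\T$ is equivalent to $s\in\T^{\star}$; in particular $-a,-b\in\T^{\star}$. Combining the two reformulations yields
\[
\left([a,b]_{\T}\right)^{\star}=\{s\in\R\mid s\in[-b,-a]\ \text{and}\ s\in\T^{\star}\}=[-b,-a]\cap\T^{\star}=[-b,-a]_{\T^{\star}},
\]
which is the assertion.

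There is essentially no obstacle here: the argument is a routine bookkeeping with the definition of the star map and the fact that negation reverses inequalities. The only point worth stating explicitly is the consistency remark that $\left(\,\cdot\,\right)^{\star}$ on an interval denotes the induced set operation $A\mapsto\{s\mid -s\in A\}$, which is forced by the definition of $\T^{\star}$; once that is granted, the computation above is immediate.
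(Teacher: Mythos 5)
Your proof is correct: the paper itself states this lemma without proof, citing Caputo's duality paper, and your definition-chase (reading $A^{\star}=\{s\in\R\mid -s\in A\}$, splitting the membership condition $-s\in[a,b]\cap\T$ into the interval condition, which negation reverses to $-b\le s\le -a$, and the time-scale condition, which is exactly $s\in\T^{\star}$) is the standard and essentially only argument. The one point you rightly make explicit---that $\star$ applied to $[a,b]_{\T}$ must be read as the induced reflection of sets, consistent with the definition of $\T^{\star}$---is the only place where care is needed, and you handle it correctly.
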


\begin{definition}[Dual function]
Given a function $f:\T\rightarrow \R$ defined on the time scale $\T$,
the \emph{dual function} $f^{\star}:\T^{\star}\rightarrow\R$ on the dual time scale
$\T^{\star}$ is defined by $f^{\star}(s):=f(-s)$ for all $s\in\T^{\star}$.
\end{definition}

\begin{lemma}[\cite{C:10}]
\label{lem1}
If $ \sigma$, $\rho:\T\rightarrow \T$ are the jump operators for $\T$,
then the jump operators for $\T^{\star}$,
$\hat\sigma$, $\hat\rho:\T^{\star}\rightarrow\T^{\star}$,
are given by $\hat{\sigma}(s)=-\rho(-s)$ and
$\hat{\rho}(s)=-\sigma(-s)$ for all $s\in\T^{\star}$.
\end{lemma}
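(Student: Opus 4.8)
The plan is to argue directly from the definitions of the jump operators together with the elementary behaviour of $\inf$ and $\sup$ under the reflection $x\mapsto -x$ on $\R$. Recall that $\sigma(t)=\inf\{u\in\T : u>t\}$ and $\rho(t)=\sup\{u\in\T : u<t\}$, with the standard conventions $\inf\emptyset=\sup\T$ and $\sup\emptyset=\inf\T$ used at the endpoints of $\T$. First I would note that $\T^{\star}$ is genuinely a time scale: it is the image of the closed set $\T$ under the homeomorphism $x\mapsto -x$, hence a nonempty closed subset of $\R$, so the operators $\hat\sigma,\hat\rho$ are well defined; and for $s\in\T^{\star}$ we have $-s\in\T$, so $\sigma(-s)$ and $\rho(-s)$ also make sense.

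Next, fix $s\in\T^{\star}$ and compute $\hat\sigma(s)=\inf\{r\in\T^{\star} : r>s\}$. Since $r\in\T^{\star}$ is equivalent to $-r\in\T$, and $r>s$ is equivalent to $-r<-s$, the substitution $u=-r$ sets up a bijection between $\{r\in\T^{\star} : r>s\}$ and $\{u\in\T : u<-s\}$. Using $\inf(-A)=-\sup A$ this gives
$$
\hat\sigma(s)=\inf\{-u : u\in\T,\ u<-s\}=-\sup\{u\in\T : u<-s\}=-\rho(-s).
$$
The same reasoning applied to $\{r\in\T^{\star} : r<s\}$, now via $\sup(-A)=-\inf A$, yields $\hat\rho(s)=-\sup\{-u : u\in\T,\ u>-s\}$-style manipulation, i.e. $\hat\rho(s)=-\inf\{u\in\T : u>-s\}=-\sigma(-s)$, which is the second claimed identity.

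Finally I would verify the endpoint cases to confirm the conventions match. If $s=\sup\T^{\star}$ then $-s=\inf\T$, so $\rho(-s)=-s$ and the formula correctly returns $\hat\sigma(s)=s$; the case $s=\inf\T^{\star}$ is symmetric and gives $\hat\rho(s)=s$. There is no real obstacle in this argument — the entire content is the observation that reflection interchanges $\sup$ with $\inf$, and hence $\sigma$ with $\rho$. The only point deserving a word of care is the handling of these boundary conventions, and the preceding lemma, which identifies $([a,b]_{\T})^{\star}$ with $[-b,-a]_{\T^{\star}}$, already makes transparent that passing to the dual swaps the left and right ends of an interval.
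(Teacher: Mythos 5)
Your argument is correct: the identity $\{r\in\T^{\star}:r>s\}=-\{u\in\T:u<-s\}$ together with $\inf(-A)=-\sup A$ gives $\hat\sigma(s)=-\rho(-s)$ immediately, and the symmetric computation gives $\hat\rho(s)=-\sigma(-s)$; your check that the empty-set conventions at the endpoints of $\T$ are respected under reflection is the one point that genuinely needs saying, and you say it. Note that the paper itself states this lemma without proof, citing \cite{C:10}; your direct verification from the definitions of the jump operators is the standard argument given there, so there is nothing to reconcile.
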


Let $\mathbb{T}$ be a time scale.
If $\sup \mathbb{T}$ is finite and left-scattered, one defines
$\mathbb{T}^\kappa := \mathbb{T}\setminus \{\sup\mathbb{T}\}$,
otherwise $\mathbb{T}^\kappa :=\mathbb{T}$. Similarly,
a new set $\mathbb{T}_\kappa$ is derived from $\mathbb{T}$ as
follows: if  $\mathbb{T}$ has a right-scattered minimum $m$, then
$\mathbb{T}_\kappa=\mathbb{T}\setminus\{m\}$; otherwise,
$\mathbb{T}_\kappa= \mathbb{T}$.

\begin{lemma}[\cite{C:10}]
Given a time scale $\T$, then
$(\T^{\kappa})^{\star}=(\T^{\star})_{\kappa}$, and
$(\T_{\kappa})^{\star}=(\T^{\star})^{\kappa}$.
\end{lemma}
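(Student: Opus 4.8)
Plan for the proof of the lemma $(\T^{\kappa})^{\star}=(\T^{\star})_{\kappa}$ and $(\T_{\kappa})^{\star}=(\T^{\star})^{\kappa}$.

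The plan is to prove only the first identity in full, since the second follows from it by the symmetry $(\T^{\star})^{\star}=\T$: applying the first identity to the time scale $\T^{\star}$ in place of $\T$ gives $((\T^{\star})^{\kappa})^{\star}=((\T^{\star})^{\star})_{\kappa}=\T_{\kappa}$, and dualising once more (using that $(\cdot)^{\star}$ is an involution on subsets of $\R$) yields $(\T_{\kappa})^{\star}=(\T^{\star})^{\kappa}$. So the whole content is the first equality.

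To establish $(\T^{\kappa})^{\star}=(\T^{\star})_{\kappa}$ I would argue by cases according to the definitions of $\kappa$ and $_{\kappa}$. First, observe the elementary facts: $\sup\T$ is finite iff $\inf\T^{\star}$ is finite, and in that case $\inf\T^{\star}=-\sup\T$; moreover, by Lemma~\ref{lem1} and the definition of the dual, a point $t\in\T$ is left-scattered for $\T$ (i.e.\ $\rho(t)<t$) if and only if $-t\in\T^{\star}$ is right-scattered for $\T^{\star}$ (i.e.\ $\hat\sigma(-t)>-t$), because $\hat\sigma(-t)=-\rho(t)$. Taking $t=\sup\T$, this says: $\sup\T$ is a finite left-scattered maximum of $\T$ if and only if $\inf\T^{\star}=-\sup\T$ is a finite right-scattered minimum of $\T^{\star}$. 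In the branch where this holds, $\T^{\kappa}=\T\setminus\{\sup\T\}$, so $(\T^{\kappa})^{\star}=\T^{\star}\setminus\{-\sup\T\}=\T^{\star}\setminus\{\inf\T^{\star}\}=(\T^{\star})_{\kappa}$; in the complementary branch, $\T^{\kappa}=\T$ and simultaneously $\T^{\star}$ has no right-scattered minimum (or no minimum at all), so $(\T^{\star})_{\kappa}=\T^{\star}=(\T^{\kappa})^{\star}$. Either way the two sides agree.

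The main (very mild) obstacle is just bookkeeping: one must make sure the ``otherwise'' clauses in the definitions of $\T^{\kappa}$ and $\T_{\kappa}$ are matched correctly, in particular handling the degenerate situations where $\sup\T=+\infty$ (so $\T^{\star}$ is unbounded below and has no minimum) or where $\sup\T$ is finite but left-dense (so removing nothing on one side corresponds to removing nothing on the other). Each of these is dispatched by the if-and-only-if translation above between ``$\sup\T$ finite left-scattered'' and ``$\inf\T^{\star}$ finite right-scattered'', which is the only nontrivial ingredient and which is immediate from Lemma~\ref{lem1}. No estimates or limiting arguments are needed; the proof is a short case analysis.
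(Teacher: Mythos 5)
Your argument is correct. Note that the paper itself offers no proof of this lemma at all---it is quoted verbatim from Caputo's article \cite{C:10}---so there is nothing internal to compare against; your write-up supplies the missing verification. The two essential ingredients you isolate are exactly the right ones: the involution property of $(\cdot)^{\star}$, which reduces the second identity to the first, and the equivalence ``$\sup\T$ is a finite left-scattered maximum of $\T$ if and only if $-\sup\T$ is a finite right-scattered minimum of $\T^{\star}$,'' which follows from $\hat\sigma(s)=-\rho(-s)$ together with the fact that a time scale is closed (so a finite supremum is attained, and hence a finite infimum of the dual is a genuine minimum). The case analysis then matches the ``otherwise'' clauses correctly, including the unbounded and dense-endpoint cases, so the proof is complete.
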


\begin{lemma}[\cite{C:10}]
\label{lem2}
Given $ \mu:\T\rightarrow \R$, the forward graininess of $\T$,
then the backward graininess of $\T^{\star}$, $\hat \nu:\T^{\star}\rightarrow \R$,
is given by the identity $\hat\nu (s)=\mu^{\star}(s)$
for all $s\in\T^{\star}$. Similarly, given $\nu:\T\rightarrow \R$,
the backward graininess of $\T$, then the forward graininess of $\T^{\star}$,
$\hat \mu:\T^{\star}\rightarrow \R$, is given by the identity
$\hat\mu (s)=\nu^{\star}(s)$ for all $s\in\T^{\star}$.
\end{lemma}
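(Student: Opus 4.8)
The plan is to unwind the definitions of forward and backward graininess together with the description of the dual jump operators already obtained in Lemma~\ref{lem1}. Recall that for any time scale $\T$ one has $\mu(t)=\sigma(t)-t$ and $\nu(t)=t-\rho(t)$, and that the dual function satisfies $f^{\star}(s)=f(-s)$ for $s\in\T^{\star}$. The whole argument is then a short, sign-careful computation.

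First I would treat the backward graininess $\hat{\nu}$ of $\T^{\star}$. By definition $\hat{\nu}(s)=s-\hat{\rho}(s)$, and Lemma~\ref{lem1} gives $\hat{\rho}(s)=-\sigma(-s)$, so $\hat{\nu}(s)=s+\sigma(-s)$. On the other hand $\mu^{\star}(s)=\mu(-s)=\sigma(-s)-(-s)=\sigma(-s)+s$. Comparing the two expressions yields $\hat{\nu}(s)=\mu^{\star}(s)$ for every $s\in\T^{\star}$. The second identity is entirely analogous: $\hat{\mu}(s)=\hat{\sigma}(s)-s=-\rho(-s)-s$ by Lemma~\ref{lem1}, while $\nu^{\star}(s)=\nu(-s)=(-s)-\rho(-s)=-s-\rho(-s)$, and the two agree. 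One should also note that the domains match, since $\T^{\star}$ is the common domain of $\hat{\mu}$, $\hat{\nu}$, $\mu^{\star}$ and $\nu^{\star}$, so the stated identities hold pointwise on all of $\T^{\star}$.

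There is essentially no obstacle here: the statement is an immediate consequence of Lemma~\ref{lem1} and the definitions of graininess and of the dual function, and the only point to watch is the bookkeeping of signs when substituting $-s$ for $t$. If one wished to be self-contained one could in fact run the same computation in the other direction to recover Lemma~\ref{lem1}, but since it is already available the two-line verification above suffices.
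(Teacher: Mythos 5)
Your computation is correct and is exactly the standard argument: the paper itself states this lemma without proof (citing Caputo), and the intended verification is precisely your two-line substitution of the dual jump operators from Lemma~\ref{lem1} into the definitions $\hat\nu(s)=s-\hat\rho(s)$ and $\hat\mu(s)=\hat\sigma(s)-s$. The sign bookkeeping checks out in both cases, so there is nothing to add.
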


\begin{lemma}[\cite{C:10}]
\label{lemmacont}
Function $f:\T\rightarrow \R$ is rd-continuous (resp. ld-continuous) if and only if its dual
$f^{\star}:\T^{\star}\rightarrow \R$ is ld-continuous (resp. rd-continuous).
\end{lemma}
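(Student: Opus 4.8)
The plan is to reduce everything to the orientation-reversing bijection $\phi\colon\T\to\T^{\star}$, $\phi(t)=-t$, together with Lemma~\ref{lem1}. First I would recall the relevant definitions: a point $t\in\T$ is right-dense when $\sigma(t)=t$ and left-dense when $\rho(t)=t$, and $f\colon\T\to\R$ is rd-continuous precisely when it is continuous at every right-dense point and possesses a finite left-sided limit at every left-dense point (the ld-continuous case being symmetric, with the roles of left and right interchanged). Since $\T$ and $\T^{\star}$ carry the subspace topology from $\R$, the map $\phi$ is the restriction to $\T$ of the homeomorphism $x\mapsto -x$ of $\R$, hence a homeomorphism of $\T$ onto $\T^{\star}$; moreover $f^{\star}=f\circ\phi^{-1}$. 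Therefore $f$ is continuous at $t_{0}$ if and only if $f^{\star}$ is continuous at $\phi(t_{0})=-t_{0}$. In particular this already disposes of isolated points, where no extra condition is imposed on either side.

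Next I would use Lemma~\ref{lem1} to match the types of points. From $\hat\sigma(s)=-\rho(-s)$ it follows that $s\in\T^{\star}$ is right-dense (i.e.\ $\hat\sigma(s)=s$) exactly when $\rho(-s)=-s$, that is, exactly when $-s$ is left-dense in $\T$; dually, from $\hat\rho(s)=-\sigma(-s)$, $s$ is left-dense in $\T^{\star}$ exactly when $-s$ is right-dense in $\T$. Thus $\phi$ carries the right-dense points of $\T$ onto the left-dense points of $\T^{\star}$, and the left-dense points of $\T$ onto the right-dense points of $\T^{\star}$.

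Then I would check that one-sided limits are interchanged by $\phi$. If $t_{0}\in\T$ is left-dense, then for $r\in\T$ with $r\uparrow t_{0}$ the points $s=-r\in\T^{\star}$ satisfy $s\downarrow -t_{0}$, and $f^{\star}(s)=f(r)$; hence $\lim_{r\to t_{0}^{-}}f(r)$ exists and is finite if and only if $\lim_{s\to(-t_{0})^{+}}f^{\star}(s)$ exists and is finite, with the same value, while $-t_{0}$ is right-dense in $\T^{\star}$ by the previous step. Combining the three observations gives the chain of equivalences: $f$ is rd-continuous $\iff$ $f$ is continuous at the right-dense points of $\T$ and has finite left limits at the left-dense points of $\T$ $\iff$ $f^{\star}$ is continuous at the left-dense points of $\T^{\star}$ and has finite right limits at the right-dense points of $\T^{\star}$ $\iff$ $f^{\star}$ is ld-continuous. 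The ``resp.'' statement then follows either by reading this chain with left and right swapped, or simply by applying the result already obtained to $f^{\star}$ on $(\T^{\star})^{\star}=\T$, using $(f^{\star})^{\star}=f$.

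I do not expect a serious obstacle in this argument; essentially it is bookkeeping about reversal of orientation. The only points that demand some care are the endpoints $a=\inf\T$ and $b=\sup\T$, where one of the one-sided limits in the definitions is vacuous, and points that happen to be simultaneously right-dense and left-dense; but in each of these situations the correspondence furnished by $\phi$ and Lemma~\ref{lem1} still applies verbatim, so the equivalences go through unchanged.
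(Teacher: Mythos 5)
The paper states this lemma without proof, importing it directly from \cite{C:10}, so there is no in-paper argument to compare against. Your proposal is correct and complete: the three ingredients --- that $t\mapsto -t$ is a homeomorphism of $\T$ onto $\T^{\star}$ with $f^{\star}=f\circ\phi^{-1}$, that Lemma~\ref{lem1} exchanges right-dense and left-dense points under this map, and that left-sided limits of $f$ at $t_{0}$ become right-sided limits of $f^{\star}$ at $-t_{0}$ --- combine to give exactly the required equivalence, and the ``resp.''\ case follows by applying the result to $f^{\star}$ on $(\T^{\star})^{\star}=\T$. This is also the standard argument given in the cited source, so nothing further is needed.
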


\begin{lemma}[\cite{C:10}]
\label{lemmaderi}
If $f:\T\rightarrow \R$ is delta (resp. nabla) differentiable at $t_0\in\T^{\kappa}$
(resp. at $t_0\in\T_{\kappa}$), then $f^{\star}:\T^{\star}\rightarrow \R$ is nabla
(resp. delta) differentiable at $-t_0\in(\T^{\star})_{\kappa}$
(resp. at $-t_0\in(\T^{\star})^{\kappa}$), and the following equalities hold true:
$f^{\Delta}(t_0)=-(f^{\star})^{\hat\nabla}(-t_0)$
(resp.  $f^{\nabla}(t_0)=-(f^{\star})^{\hat\Delta}(-t_0)$),
$f^{\Delta}(t_0)=-((f^{\star})^{\hat\nabla})^{\star}(t_0)$
(resp. $f^{\nabla}(t_0)=-((f^{\star})^{\hat\Delta})^{\star}(t_0)$),
and $(f^{\Delta})^{\star}(-t_0)=-((f^{\star})^{\hat\nabla})(-t_0)$
(resp. $( f^{\nabla})^{\star}(-t_0)=-(f^{\star})^{\hat\Delta}(-t_0)$),
where $\Delta$ and $\nabla$ denote the delta and nabla derivative for the time scale $\T$ while
$\hat\Delta$ and $\hat\nabla$ denote the delta and nabla derivative for the time scale $\T^{\star}$.
\end{lemma}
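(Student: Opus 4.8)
The plan is to argue directly from the $\varepsilon$--neighbourhood definition of the delta and nabla derivatives and to transport it through the reflection $t\mapsto -t$, the computation being organized by Lemma~\ref{lem1} and by the identities $(\T^{\kappa})^{\star}=(\T^{\star})_{\kappa}$, $(\T_{\kappa})^{\star}=(\T^{\star})^{\kappa}$ already recorded above.

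First I would treat the delta case. Fix $t_0\in\T^{\kappa}$ at which $f$ is delta differentiable; by $(\T^{\kappa})^{\star}=(\T^{\star})_{\kappa}$ the point $s_0:=-t_0$ belongs to $(\T^{\star})_{\kappa}$, so it is legitimate to test nabla differentiability of $f^{\star}$ there. By Lemma~\ref{lem1}, $\hat\rho(s_0)=-\sigma(-s_0)=-\sigma(t_0)$, whence $f^{\star}(\hat\rho(s_0))=f^{\star}(-\sigma(t_0))=f(\sigma(t_0))$. For $r\in\T^{\star}$ put $s:=-r\in\T$; then $f^{\star}(r)=f(s)$, $\hat\rho(s_0)-r=-(\sigma(t_0)-s)$ and $|\hat\rho(s_0)-r|=|\sigma(t_0)-s|$, so for any constant $c$
\[
\bigl|f^{\star}(\hat\rho(s_0))-f^{\star}(r)-c\,(\hat\rho(s_0)-r)\bigr|
=\bigl|f(\sigma(t_0))-f(s)-(-c)\,(\sigma(t_0)-s)\bigr|.
\]
Since $r\mapsto -r$ is a homeomorphism sending neighbourhoods of $s_0$ in $\T^{\star}$ onto neighbourhoods of $t_0$ in $\T$, the left-hand side is $\le\varepsilon\,|\hat\rho(s_0)-r|$ on some neighbourhood of $s_0$ exactly when the defining estimate for $f^{\Delta}(t_0)$ holds with the number $-c$ in place of $f^{\Delta}(t_0)$. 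Taking $c=-f^{\Delta}(t_0)$ shows that $f^{\star}$ is nabla differentiable at $-t_0$ with $(f^{\star})^{\hat\nabla}(-t_0)=-f^{\Delta}(t_0)$, i.e. $f^{\Delta}(t_0)=-(f^{\star})^{\hat\nabla}(-t_0)$, the first asserted identity.

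The other two identities in the delta case then follow formally from the definition of the dual of a function. Since $g^{\star}(\tau)=g(-\tau)$ for any $g$ defined on $\T^{\star}$, we get $\bigl((f^{\star})^{\hat\nabla}\bigr)^{\star}(t_0)=(f^{\star})^{\hat\nabla}(-t_0)$, giving $f^{\Delta}(t_0)=-\bigl((f^{\star})^{\hat\nabla}\bigr)^{\star}(t_0)$; and since $(f^{\Delta})^{\star}(-t_0)=f^{\Delta}(t_0)$, the first identity rewrites as $(f^{\Delta})^{\star}(-t_0)=-\bigl((f^{\star})^{\hat\nabla}\bigr)(-t_0)$.

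Finally, the nabla case is obtained by the identical argument with the roles of $\sigma$ and $\rho$ interchanged: starting from $t_0\in\T_{\kappa}$, one uses $(\T_{\kappa})^{\star}=(\T^{\star})^{\kappa}$ and the relation $\hat\sigma(s_0)=-\rho(-s_0)$ from Lemma~\ref{lem1}, and tests delta differentiability of $f^{\star}$ at $-t_0$. I do not expect a real obstacle; the only delicate points are the systematic tracking of the two sources of sign change --- the reflection of the variable, and the extra minus sign appearing when $\hat\rho(s_0)-r$ (resp. $\hat\sigma(s_0)-r$) is rewritten via $\sigma(t_0)-s$ (resp. $\rho(t_0)-s$) --- together with the remark that reflection is a homeomorphism, so that the neighbourhood quantifiers on $\T$ and on $\T^{\star}$ correspond under $s=-r$.
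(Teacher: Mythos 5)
Your argument is correct. Note, however, that the paper itself offers no proof of this lemma: it is quoted verbatim from Caputo's duality paper \cite{C:10}, so there is nothing internal to compare against. Your derivation --- transporting the $\varepsilon$-neighbourhood definition of the delta derivative through the reflection $r\mapsto -r$, using $\hat\rho(s_0)=-\sigma(-s_0)$ from Lemma~\ref{lem1} and $(\T^{\kappa})^{\star}=(\T^{\star})_{\kappa}$ to legitimise the point $-t_0$, and then reading off the remaining two identities as formal consequences of $g^{\star}(\tau)=g(-\tau)$ --- is exactly the standard proof of this duality statement, and the sign bookkeeping (the reflection of the variable versus the minus sign in $\hat\rho(s_0)-r=-(\sigma(t_0)-s)$) is handled correctly.
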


\begin{lemma}[\cite{C:10}]
\label{lemmacontder}
Let $f:\T\rightarrow \R$. Function $f$ belongs to $C^1_{rd}$ (resp. $C^1_{ld}$)
if and only if its dual $f^{\star}:\T^{\star}\rightarrow \R$
belongs to $C^1_{ld}$ (resp. $C^1_{rd}$).
\end{lemma}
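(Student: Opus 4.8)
The plan is to deduce the statement from three facts already in hand: the compatibility of the $\kappa$-operations with dualization, the transfer of differentiability under dualization (Lemma~\ref{lemmaderi}), and the transfer of rd-/ld-continuity under dualization (Lemma~\ref{lemmacont}). Recall that, by definition, $f\in C^1_{rd}$ means that $f$ is delta differentiable on $\T^{\kappa}$ with $f^{\Delta}$ rd-continuous, while a function $g$ on $\T^{\star}$ belongs to $C^1_{ld}$ when $g$ is nabla differentiable on $(\T^{\star})_{\kappa}$ with $g^{\hat\nabla}$ ld-continuous. I shall prove the first equivalence in detail; the second then follows either by repeating the argument with the words ``rd'' and ``ld'' interchanged, or simply by applying the first equivalence to $f^{\star}$ and using that dualization is an involution, i.e. $(\T^{\star})^{\star}=\T$ and $(f^{\star})^{\star}=f$.

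First I would establish the implication $f\in C^1_{rd}\Rightarrow f^{\star}\in C^1_{ld}$. Assuming $f$ delta differentiable at each $t_0\in\T^{\kappa}$, Lemma~\ref{lemmaderi} applies at every such $t_0$ and gives that $f^{\star}$ is nabla differentiable at $-t_0$ with $(f^{\Delta})^{\star}(-t_0)=-(f^{\star})^{\hat\nabla}(-t_0)$. Because $(\T^{\kappa})^{\star}=(\T^{\star})_{\kappa}$, the points $-t_0$ exhaust $(\T^{\star})_{\kappa}$, so $f^{\star}$ is nabla differentiable on $(\T^{\star})_{\kappa}$ and, as functions on that set,
\[
(f^{\star})^{\hat\nabla}=-(f^{\Delta})^{\star}.
\]
Since $f^{\Delta}$ is rd-continuous, Lemma~\ref{lemmacont} shows that $(f^{\Delta})^{\star}$ is ld-continuous, hence so is $(f^{\star})^{\hat\nabla}$, and therefore $f^{\star}\in C^1_{ld}$.

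For the converse I would run the same chain backwards. If $f^{\star}\in C^1_{ld}$, then $f^{\star}$ is nabla differentiable on $(\T^{\star})_{\kappa}$, so the nabla part of Lemma~\ref{lemmaderi}, applied to the function $f^{\star}$ on the time scale $\T^{\star}$ (whose dual time scale is $\T$ and whose dual function is $f$), yields that $f$ is delta differentiable on $\T^{\kappa}$, with
\[
f^{\Delta}=-\bigl((f^{\star})^{\hat\nabla}\bigr)^{\star}
\]
on $\T^{\kappa}$, the domains matching again by $(\T^{\kappa})^{\star}=(\T^{\star})_{\kappa}$. As $(f^{\star})^{\hat\nabla}$ is ld-continuous by hypothesis, Lemma~\ref{lemmacont} makes its dual rd-continuous, so $f^{\Delta}$ is rd-continuous and $f\in C^1_{rd}$.

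The argument is essentially bookkeeping with the duality identities; the only point that needs a little care is the passage from the pointwise conclusions of Lemma~\ref{lemmaderi} to statements valid on the entire differentiation domain, which is exactly what the identities $(\T^{\kappa})^{\star}=(\T^{\star})_{\kappa}$ and $(\T_{\kappa})^{\star}=(\T^{\star})^{\kappa}$ provide. I do not expect any genuine obstacle beyond this.
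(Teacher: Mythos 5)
Your argument is correct: combining Lemma~\ref{lemmaderi} (transfer of differentiability and the identity $(f^{\star})^{\hat\nabla}=-(f^{\Delta})^{\star}$), Lemma~\ref{lemmacont} applied to the derivative, and the identity $(\T^{\kappa})^{\star}=(\T^{\star})_{\kappa}$ to match the differentiation domains is exactly what is needed, and the converse via the involutive character of dualization is handled properly. Note, however, that the paper itself offers no proof to compare against --- this lemma is imported verbatim from the cited reference \cite{C:10}, where it is established by essentially the same bookkeeping you carry out here.
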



\subsection{Set-valued maps and dynamic inclusions}
\label{subsec:aux}

Let $K:\T\twoheadrightarrow\R^n$ be a set-valued map such that
$K(t)=\{x\in\R^n : b_i(t)< x_i< c_i(t),\, i=1, \ldots, n\}$,
where $b_i,c_i:\mathbb{T} \rightarrow \mathbb{R}$, $i=1, \ldots, n$,
are delta differentiable functions on $\T$ such that $b_i(t) < c_i(t)$
for each $t\in \T$. The graph of $K$, $Graph(K)\subseteq\T\times\R^n$,
is defined by $Graph(K):=\{(t,x):t\in\T,\ x\in K(t)\}$.

Let us consider the control system
\begin{equation}
\label{eq:00}
x^\Delta(t)=f(t,x(t),u(t)),
\quad u\in\mathcal{U},
\end{equation}
where $\mathcal{U}=\{u(\cdot) \ | \ u(\cdot) \text{ is }
\Delta\text{-measurable\;piecewise\;rd-continuous\;with }
u(t)\in U\subset\R^m\;
\text{for\;all}\;t\in\T\}$
is the set of admissible controls. Suppose that the set
$U\subset\R^m$ of control values
is compact and function $f:Graph(K) \times U \rightarrow \R^n$
is rd-continuous with respect to the first variable and continuously
differentiable with respect to the second variable $x\in\R^n$.
For a fixed $u$ we can rewrite \eqref{eq:00} and consider the dynamic system
\begin{equation}
\label{eq:0}
x^\Delta(t) = f_u(t,x(t)),
\end{equation}
where $f_u=(f_1, \ldots, f_n):\T\times\R^n\rightarrow \R^n$.
Let us assume that $f_u$ is continuous. Moreover, for every fixed
non-isolated point $t\in \T$, let $S_t\subseteq\T\times\R^n$ be a closed set,
$[t-a, t+a]\times \overline{K(t)} \subset S_t$ for an $a > 0$, $\inf \T\leq t-a$,
$\sup \T \geq t + a$, such that $f_u$ is rd-continuous, bounded and Lipschitz
continuous on $S_t$.
Let $\partial Graph(K):=\{(t,x) : t\in\T,\, x\in \partial K(t)\}$ with
$\partial K(t)=\overline{K(t)}\backslash K(t)$, where
$\overline{K(t)}=\{x\in\R^n:b_i(t)\leq x_i\leq c_i(t),\, i=1, \ldots, n\}$.
By a \emph{trajectory} of the system \eqref{eq:00}
from $x_0$ corresponding to the control $u\in \mathcal{U}$ we mean the  function
$x=\psi(t_0,\cdot,x_0,u) :[t_0^u,t_1^u]_{\T} \rightarrow \R^n$
such that $x$ is the unique solution of the initial value problem
$x^{\Delta}(t)=f_u(t,x(t))$, $x(t_0)=x_0$, provided it is defined
for all $t \in [t_0,t_1]_{\T}$ and $x(t)\in K(t)$.
We define the auxiliary functions
$B_i(t,x):=-x_i+b_i(t)$ and $C_i(t,x):=x_i-c_i(t)$ on $\T\times\R^n$,
$i\in \{1, \ldots, n\}$, and sets
$Graph(K)^i_B:=\{(t,x)\in Graph(K):B_i(t,x)=0\}$
and $Graph(K)^i_C:=\{(t,x)\in Graph(K):C_i(t,x)=0\}$,
where $i\in \{1, \ldots, n\}$. It is clear that
$\partial Graph(K)=\bigcup^n_{i=1}
\left(Graph(K)^i_B\cup Graph(K)^i_C\right)$.

\begin{definition}[Delta-points of strict egress]
A point
$M=(t, x_1, \ldots, x_{i-1}, b_i(t), x_{i+1}, \ldots, x_n)
\in Graph(K)^i_B$, $i\in \{1, \ldots, n\}$,
is called \emph{a delta-point of strict egress for the set} $Graph(K)$
\emph{with respect to system} \eqref{eq:0} if
$f_i(M) < b^\Delta_i(t)$. A point
$M=(t, x_1, \ldots, x_{i-1}, c_i(t), x_{i+1}, \ldots, x_n)
\in Graph(K)^i_C$, $i\in \{1, \ldots, n\}$,
is called \emph{a delta-point of strict egress for the set} $Graph(K)$
\emph{with respect to system} \eqref{eq:0} if $f_i(M) > c^\Delta_i(t)$.
\end{definition}

\begin{theorem}[\cite{D:09}]
\label{th:Diblik}
Let $f_u: \T\times\R^n \rightarrow \R^n$ and $K:\T\twoheadrightarrow \R^n$, where
$K(t)=\{x\in\R^n:b_i(t)< x_i< c_i(t),\, i=1, \ldots, n\}$ with $b_i,c_i:\mathbb{T}
\rightarrow \mathbb{R}$, $i=1, \ldots, n$, delta differentiable functions
on $\T$ such that $b_i(t) < c_i(t)$ for each $t\in \T$. If every point
$M\in \partial Graph(K)$ is a delta-point of strict egress for the set
$Graph(K)$ with respect to system \eqref{eq:0}, then there exists a value
$\overline{x}\in K(t_0)$ such that the initial value problem
$x^\Delta(t)= f_u(t,x(t))$, $x(t_0)=\overline{x}$,
has a solution $x=\overline{x}(t)$ to \eqref{eq:0}
satisfying $(t,\overline{x}(t))\in Graph(K)$ for every $t\in\T$.
\end{theorem}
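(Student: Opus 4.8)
The statement to be proved (Theorem~\ref{th:Diblik}) is attributed to \cite{D:09}, so the natural route is to identify it with the main theorem of that reference on Wa\.zewski-type retract methods for delta dynamic equations on time scales, and to recast the present hypotheses into the form used there. Concretely, the plan is to introduce the combined boundary-defect function built from the auxiliary maps $B_i(t,x)=-x_i+b_i(t)$ and $C_i(t,x)=x_i-c_i(t)$, whose zero sets stratify $\partial Graph(K)$ as already recorded in the excerpt, namely $\partial Graph(K)=\bigcup_{i=1}^n\bigl(Graph(K)^i_B\cup Graph(K)^i_C\bigr)$. The strict-egress hypothesis says exactly that along each stratum the relevant component of the vector field points strictly out of the slab $b_i(t)<x_i<c_i(t)$ after accounting for the motion of the walls; this is the time-scale analogue of the transversality condition that makes $\partial Graph(K)$ a set of strict egress points in Wa\.zewski's classical sense.

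First I would invoke the hypotheses from the block preceding the theorem that guarantee local existence and uniqueness: for every fixed non-isolated $t$ there is a closed neighbourhood $S_t$ with $[t-a,t+a]\times\overline{K(t)}\subset S_t$ on which $f_u$ is rd-continuous, bounded and Lipschitz, so that through each $(t_0,x_0)\in Graph(K)$ there passes a unique maximal solution of $x^\Delta=f_u(t,x)$; at isolated points the dynamic equation is just a recursion $x(\sigma(t))=x(t)+\mu(t)f_u(t,x(t))$ and has a trivially unique forward value. This provides a well-defined (partial) flow on $Graph(K)$. Second, I would argue that the strict-egress condition prevents a solution from touching $\partial Graph(K)$ and then returning to the interior: if $x(\cdot)$ satisfies $x(t^\ast)\in\partial K(t^\ast)$ with, say, $x_i(t^\ast)=b_i(t^\ast)$, then $(x_i-b_i)^\Delta(t^\ast)=f_i(M)-b_i^\Delta(t^\ast)<0$, and a delta-derivative sign argument (distinguishing $t^\ast$ left-dense, where one uses the limit characterisation of $x^\Delta$, from $t^\ast$ left-scattered, where one compares $x_i(t^\ast)$ with $x_i(\rho(t^\ast))$) shows the solution was already outside $K$ just before $t^\ast$; symmetrically for the $c_i$ walls. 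Hence every boundary point is genuinely a point of egress, and the "exit time" map assigning to a starting point the first $t$ at which its trajectory leaves $\overline{Graph(K)}$ is continuous where finite, with the exit actually occurring on $\partial Graph(K)$.

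Third, I would run the retract argument. Suppose, for contradiction, that for \emph{every} $\overline x\in K(t_0)$ the solution starting at $(t_0,\overline x)$ fails to stay in $Graph(K)$ for all $t\in\T$; then each such solution leaves $\overline{Graph(K)}$ at a finite first exit time through $\partial Graph(K)$, and composing the flow up to that exit time with a fixed deformation retraction of $\partial K(t_0)$ onto a suitable face would yield a continuous retraction of the closed box $\overline{K(t_0)}$ (a cell homeomorphic to $[0,1]^n$) onto its boundary sphere $\partial K(t_0)\simeq S^{n-1}$. By the no-retraction theorem such a map cannot exist, a contradiction; therefore at least one $\overline x\in K(t_0)$ produces a solution with $(t,\overline x(t))\in Graph(K)$ for all $t\in\T$. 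The continuity of the exit-time map, hence of the candidate retraction, is where the strict (as opposed to weak) egress hypothesis and the local Lipschitz/uniqueness assumptions are essential, and I expect verifying that continuity on time scales — uniformly across left-dense and left-scattered points, and up to the endpoints $a,b$ — to be the main technical obstacle; the topological no-retraction step itself is standard.
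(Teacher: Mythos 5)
A point of order first: the paper does not prove Theorem~\ref{th:Diblik} at all --- it is imported verbatim from \cite{D:09} and used as a black box (the paper's own contribution, Theorem~\ref{th:Diblik:nabla}, is then deduced from it by Caputo's duality). So there is no in-paper proof to compare against. Your choice of the Wa\.zewski retract method is nonetheless the right strategy: it is exactly the method of the cited source, whose title is precisely ``Wa\.zewski's method for systems of dynamic equations on time scales''.

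Two concrete gaps in your sketch, however. (1) A sign slip: at a point $M$ with $x_i(t^\ast)=b_i(t^\ast)$ the hypothesis gives $(x_i-b_i)^\Delta(t^\ast)=f_i(M)-b_i^\Delta(t^\ast)<0$; since $x_i-b_i>0$ in the interior, this forces the solution to be \emph{inside} just before $t^\ast$ and \emph{outside} just after --- not ``already outside just before $t^\ast$'' as you wrote. The correct use of the condition is that every boundary point is left immediately in forward time, so that boundary starting points have exit time $t_0$ and the putative retraction restricts to the identity on $\partial K(t_0)$. (2) More seriously, on a general time scale the first exit need \emph{not} occur on $\partial Graph(K)$: at a right-scattered $t$ the solution advances by the jump $x(\sigma(t))=x(t)+\mu(t)f_u(t,x(t))$ and can pass from the interior of $K(t)$ to the exterior of $\overline{K(\sigma(t))}$ without ever meeting a face, because the egress inequalities constrain $f_u$ only on $\partial Graph(K)$. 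Your exit-point map is therefore not defined as stated, and the continuity of the candidate retraction --- the crux of the no-retraction argument --- cannot be obtained the way you describe. This is exactly the technical core of \cite{D:09}: one replaces ``exit point on the boundary'' by an interpolation between the last trajectory point lying in $Graph(K)$ and the first lying outside, projects onto the boundary, and checks continuity of the resulting map separately at dense and scattered points. Without that device (or a substitute) the topological step does not go through. Finally, the conclusion asks for a solution defined for \emph{every} $t\in\T$, so global extendability (which follows from the boundedness of $f_u$ on the sets $S_t$) should be stated rather than left implicit in the local existence step.
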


In order to formulate the delta Filippov theorem we rewrite \eqref{eq:00}
as a delta differentiable inclusion. For that we introduce
the set-valued map $F(t,x)$ defined on the graph of $K$:
$F(t,x):=\{f(t,x,u):u\in U\subset\R^m\}$.
Then equation \eqref{eq:00} takes on the form
\begin{equation}
\label{eq:2}
x^\Delta(t) \in F\left(t,x(t)\right).
\end{equation}

\begin{theorem}[Filippov's theorem \cite{P:T:10}]
\label{th:Del:Ewa}
An absolutely rd-continuous function $x : [t_0,t_1]_{\T}\rightarrow\R^n$ is a
trajectory of \eqref{eq:00} if and only if satisfies
\eqref{eq:2} almost everywhere.
\end{theorem}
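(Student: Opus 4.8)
\medskip
\noindent\textbf{Proof plan.} The ``only if'' implication will be essentially definitional. If $x$ is a trajectory of \eqref{eq:00} associated with some $u\in\mathcal U$, then $x$ is absolutely rd-continuous, $(t,x(t))\in Graph(K)$ for all $t\in[t_0,t_1]_\T$, and $x^\Delta(t)=f(t,x(t),u(t))$ wherever the $\Delta$-derivative exists, hence $\Delta$-almost everywhere; since $u(t)\in U$, this value lies in $F(t,x(t))=\{f(t,x,u):u\in U\}$, which is exactly \eqref{eq:2}.

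The substance is the converse. Starting from an absolutely rd-continuous $x:[t_0,t_1]_\T\to\R^n$ with $(t,x(t))\in Graph(K)$ and $x^\Delta(t)\in F(t,x(t))$ for $\Delta$-almost every $t$, the plan is to construct an admissible control $u$ for which $x=\psi(t_0,\cdot,x(t_0),u)$. First I would introduce the fibre map $G(t):=\{u\in U : f(t,x(t),u)=x^\Delta(t)\}$. By the differential inclusion, $G(t)\neq\emptyset$ for $\Delta$-a.e.\ $t$, and $G(t)$ is compact because $U$ is compact and $u\mapsto f(t,x(t),u)$ is continuous. The crucial observation is that $(t,u)\mapsto |f(t,x(t),u)-x^\Delta(t)|$ is of Carath\'eodory type on $[t_0,t_1]_\T\times U$ --- it is $\Delta$-measurable in $t$ (using that $x$ is rd-continuous, $f$ is rd-continuous in its first argument, and $x^\Delta$ is $\Delta$-measurable) and continuous in $u$ --- so that $t\mapsto G(t)$ is a $\Delta$-measurable set-valued map, e.g.\ as the set of minimisers of this Carath\'eodory function, whose pointwise minimum $m(t)$ is $\Delta$-measurable and vanishes $\Delta$-a.e.

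Next I would invoke the Filippov measurable selection lemma, transcribed to the Lebesgue $\Delta$-measure on $\T$, to obtain a $\Delta$-measurable selection $t\mapsto u(t)\in U$ with $f(t,x(t),u(t))=x^\Delta(t)$ $\Delta$-a.e. The remaining, and I expect hardest, point is to promote this selection to a genuine element of $\mathcal U$, i.e.\ to a $\Delta$-measurable \emph{piecewise rd-continuous} control, and to check that the associated initial value problem $y^\Delta=f(t,y,u)$, $y(t_0)=x(t_0)$, has precisely $x$ as its unique solution with $(t,y(t))\in Graph(K)$. Here one uses a Lusin-type theorem on time scales to approximate $u$ by piecewise rd-continuous $U$-valued controls, together with the continuity and uniqueness of solutions of dynamic equations under the standing regularity, boundedness and Lipschitz hypotheses on $f$, so that admissibility is not lost and the trajectory is unchanged. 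The delicate interplay between the time-scale measure-theoretic selection, the piecewise-rd-continuity requirement built into $\mathcal U$, and the solution theory of \eqref{eq:0} is the core of the argument and the place where the $\Delta$-integration framework of \cite{P:T:10} does the real work.
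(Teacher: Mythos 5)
First, note that the paper itself gives no proof of this statement: Theorem~\ref{th:Del:Ewa} is quoted from \cite{P:T:10} and used as a black box, so there is no in-text proof to compare yours against line by line; I can only judge your argument on its own terms and against the standard proof of Filippov's lemma, which is what \cite{P:T:10} transcribes to the Lebesgue $\Delta$-measure. Your ``only if'' direction and the measurable-selection core of the converse are the right skeleton: the fibre map $G(t)=\{u\in U: f(t,x(t),u)=x^\Delta(t)\}$, its nonemptiness and compactness $\Delta$-a.e., the Carath\'eodory structure of $(t,u)\mapsto |f(t,x(t),u)-x^\Delta(t)|$, and a measurable selection theorem for the $\Delta$-measure. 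That is exactly the classical Filippov implicit-function argument.

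The genuine gap is the final step, which you correctly identify as the hardest point but then resolve incorrectly. The class $\mathcal{U}$ in this paper requires controls to be $\Delta$-measurable \emph{and piecewise rd-continuous}, whereas the selection theorem only delivers a $\Delta$-measurable $u$. Your proposed fix --- a Lusin-type approximation by piecewise rd-continuous controls ``so that \ldots the trajectory is unchanged'' --- does not work: Lusin's theorem only lets you modify $u$ off a set of small positive measure, and modifying the control on any set of positive $\Delta$-measure will in general change the solution of $x^\Delta=f(t,x,u)$. Continuity of the solution map gives trajectories \emph{close to} $x$, not equal to $x$, so this route only recovers the ``if'' direction in a relaxed or closure sense. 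The correct resolution is definitional rather than analytic: in the source the admissible controls are merely measurable, so the measurable selection already finishes the proof with no upgrading step; with the piecewise rd-continuity requirement built into $\mathcal{U}$ as stated here, the equivalence you are asked to prove is not obtainable by your method (and is doubtful as literally stated). You should either drop that requirement from the control class or exhibit a selection theorem that genuinely produces a piecewise rd-continuous selector, which is not available in this generality.
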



\section{Main results}
\label{sec:mr}

We adopt here the backward perspective of nabla calculus,
which is sometimes preferable to the delta (forward) approach
\cite{Ricardo,Atici,Natalka,PawTor}.
Let $P:\T\twoheadrightarrow\R^n$ be a set-valued map such that
$P(t)=\{y\in\R^n:\beta_i(t)> y_i>\gamma_i(t),\, i=1, \ldots, n\}$,
where $\beta_i,\gamma_i:\mathbb{T} \rightarrow \mathbb{R}$, $i=1, \ldots, n$,
are nabla differentiable functions on $\T$ such that
$\beta_i(t) > \gamma_i(t)$ for each $t\in \T$.
Consider the nabla differential control system
\begin{equation}
\label{eq:nabla:1}
y^{\nabla}(t)=g(t,y(t),v(t)),
\quad y(t_0)=y_0,
\end{equation}
where the control set $V\subset\R^m$ is compact,
the controls are measurable functions
$v$ with $v(t)\in V$ almost everywhere in $[t_0,t_1]_{\T}$,
and $g:Graph(P)\times V\longrightarrow \R^n$
is $ld$-continuous with respect to $t$ and continuously
differentiable with respect to $y$. For a fixed $v$
we can rewrite $y^{\nabla}(t)=g(t,y(t),v(t))$
as the dynamic system
\begin{equation}
\label{eq:nabla:0}
y^\nabla(t)= g_v(t,y(t)),
\end{equation}
where $g_v=(g_1, \ldots, g_n):\T\times\R^n\rightarrow \R^n$.
Throughout the paper we assume that $g_v$ is continuous.
For every fixed non-isolated point $t\in \T$,
let $Z_t\subseteq\T\times\R^n$ be a closed set,
$[t-a, t+a]\times \overline{P(t)} \subset Z_t$ for an $a > 0$, $\inf \T\leq t-a$,
$\sup \T \geq t + a$, such that $g_v$ is ld-continuous, bounded and Lipschitz
continuous on $Z_t$. Let $\partial Graph(P):=\{(t,y):t\in\T,y\in \partial P(t)\}$
with $\partial P(t)=\overline{P(t)}\backslash P(t)$,
where $\overline{P(t)}=\{y\in\R^n : \beta_i(t)\geq y_i\geq\gamma_i(t),\, i=1, \ldots, n\}$.
If we introduce the set-valued map
$G(t,y)=\{g(t,y,v):v\in V\subset\R^m\}$
defined on the graph of $P$,
then we can write \eqref{eq:nabla:1}
in the nabla differential inclusion form:
\begin{equation}
\label{eq:nabla:3}
y^{\nabla}(t)\in G(t,y(t)),
\quad y(t_0)=y_0.
\end{equation}
We pose the following questions: (i) Does there exist a control $v$ such that system
\eqref{eq:nabla:0} has a solution $y$ such that $y(t)\in P(t)$ for every $t\in [t_0,t_1]_{\T}$?
(ii) Is such a trajectory a solution of system \eqref{eq:nabla:3}?
In order to answer these questions we begin by obtaining nabla versions of
Theorems~\ref{th:Diblik} and \ref{th:Del:Ewa}.

Let us define the notion of points of strict egress in $\nabla$-calculus.
For that purpose we need the auxiliary functions
$V_i(t,y):=-y_i+\beta_i(t)$ and
$W_i(t,y):=y_i-\gamma_i(t)$ on $\T\times\R^n$,
$i\in \{1, \ldots, n\}$, and sets
$Graph(P)^i_V:=\{(t,y)\in Graph(P):V_i(t,y)=0\}$
and $Graph(P)^i_W:=\{(t,y)\in Graph(P):W_i(t,y)=0\}$,
$i\in \{1, \ldots, n\}$. It is clear that
$\partial Graph(P)=\bigcup^n_{i=1} \left(Graph(P)^i_V\cup Graph(P)^i_W\right)$.

\begin{definition}[Nabla-points of strict egress]
\label{def:nabla}
A point
$N=(t, y_1, \ldots, y_{i-1}, \gamma_i(t), y_{i+1}, \ldots, y_n) \in Graph(P)^i_W$,
$i\in \{1, \ldots, n\}$,
is called \emph{a nabla-point of strict egress for the set} $Graph(P)$
\emph{with respect to system} \eqref{eq:nabla:0} if
\begin{equation}
\label{ineq:1}
g_i(N) < \gamma^\nabla_i(t).
\end{equation}
A point
$N=(t, y_1, \ldots, y_{i-1}, \beta_i(t), y_{i+1}, \ldots, y_n)
\in Graph(P)^i_V$, $i\in \{1, \ldots, n\}$,
is called \emph{a nabla-point of strict egress for the set} $Graph(P)$
\emph{with respect to system} \eqref{eq:nabla:0} if
\begin{equation}
\label{ineq:2}
g_i(N) > \beta^\nabla_i(t).
\end{equation}
\end{definition}

\begin{theorem}[Existence of solutions for the nabla differential equation \eqref{eq:nabla:0}]
\label{th:Diblik:nabla}
Let $g_v: \T\times\R^n \rightarrow \R^n$ and $P:\T\twoheadrightarrow\R^n$,
$P(t)=\{y\in\R^n:\beta_i(t)> y_i>\gamma_i(t),\, i=1, \ldots, n\}$ with
$\beta_i,\gamma_i:\mathbb{T} \rightarrow \mathbb{R}$, $i=1, \ldots, n$,
nabla differentiable functions on $\T$ such that $\beta_i(t) > \gamma_i(t)$
for each $t\in \T$. If every point $N\in \partial Graph(P)$ is a nabla-point
of strict egress for the set $Graph(P)$ with respect to system \eqref{eq:nabla:0},
then there exists a value $\overline{y}\in P(t_0)$ such that the initial value problem
$y^\nabla(t)= g_v(t,y(t))$, $y(t_0)=\overline{y}$,
has a solution $y=\overline{y}(t)$
satisfying $(t,\overline{y}(t))\in Graph(P)$ for every $t\in\T$.
\end{theorem}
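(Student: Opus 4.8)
The plan is to derive Theorem~\ref{th:Diblik:nabla} from the delta-version Theorem~\ref{th:Diblik} by passing to the dual time scale $\T^{\star}$, applying Caputo's duality machinery recalled in Lemmas~\ref{lem1}--\ref{lemmacontder}. First I would set up the dualized data: define $b_i^{\star}:=\gamma_i^{\star}$ and $c_i^{\star}:=\beta_i^{\star}$ on $\T^{\star}$, and check that these are delta differentiable on $\T^{\star}$ (by Lemma~\ref{lemmaderi}, since $\beta_i,\gamma_i$ are nabla differentiable on $\T$) with $b_i^{\star}(s)<c_i^{\star}(s)$ for all $s\in\T^{\star}$ (this is just the inequality $\gamma_i(t)<\beta_i(t)$ read at $t=-s$). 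Correspondingly I would define the dual constraint map $K:\T^{\star}\twoheadrightarrow\R^n$ by $K(s)=\{x\in\R^n:b_i^{\star}(s)<x_i<c_i^{\star}(s)\}$, so that $\overline{K(s)}=\overline{P(-s)}$ and $Graph(K)=\{(s,x):(-s,x)\in Graph(P)\}$. The dual vector field would be $f_u(s,x):=-g_v^{\star}(s,x)=-g_v(-s,x)$, and I would verify using Lemma~\ref{lemmacont} (resp.\ the dual of the boundedness/Lipschitz hypothesis on $Z_t$, transported via $S_s:=\{(\tau,x):(-\tau,x)\in Z_{-s}\}$) that $f_u$ is rd-continuous, bounded and Lipschitz continuous on $S_s$, so that the hypotheses of Theorem~\ref{th:Diblik} are met on $\T^{\star}$, modulo the egress condition.

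The key step is then to translate the nabla-egress condition for $g_v$ on $Graph(P)$ into the delta-egress condition for $f_u$ on $Graph(K)$. By Lemma~\ref{lemmaderi}, for each $i$ we have $\gamma_i^{\nabla}(t)=-(\gamma_i^{\star})^{\hat\Delta}(-t)$, i.e.\ $\gamma_i^{\nabla}(-s)=-(b_i^{\star})^{\hat\Delta}(s)$, and similarly $\beta_i^{\nabla}(-s)=-(c_i^{\star})^{\hat\Delta}(s)$. A point $N=(t,y_1,\dots,\gamma_i(t),\dots,y_n)\in Graph(P)^i_W$ corresponds to $M=(-t,y_1,\dots,b_i^{\star}(-t),\dots,y_n)\in Graph(K)^i_B$ (in the notation of Section~\ref{subsec:aux}, with $s=-t$), and the nabla-egress inequality $g_i(N)<\gamma_i^{\nabla}(t)$ becomes, after multiplying by $-1$, $-g_i(N)>-\gamma_i^{\nabla}(t)$, that is $f_i(M)>(b_i^{\star})^{\hat\Delta}(s)$. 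But this is \emph{not} the delta strict-egress condition at a $B$-point, which reads $f_i(M)<b_i^{\star\,\hat\Delta}(s)$; rather, after the reflection $s\mapsto-s$ the roles of the ``lower'' wall $b_i^{\star}$ and the ``upper'' wall $c_i^{\star}$ get exchanged, since $\beta$ corresponds to the upper bound in $P$ but its dual $\beta^{\star}=c_i^{\star}$ plays the upper role on $\T^{\star}$ while the inequality flips sign. Carefully tracking this, the point $N\in Graph(P)^i_W$ (the $\gamma_i$/lower wall of $P$) maps to a point on the $c_i^{\star}$/upper wall of $K$ — i.e.\ to $Graph(K)^i_C$ — and the condition $g_i(N)<\gamma_i^{\nabla}(t)$ becomes exactly $f_i(M)>(c_i^{\star})^{\hat\Delta}(s)$, which \emph{is} the delta strict-egress condition at a $C$-point. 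Symmetrically, $N\in Graph(P)^i_V$ maps to $Graph(K)^i_B$ and \eqref{ineq:2} becomes $f_i(M)<(b_i^{\star})^{\hat\Delta}(s)$. Hence every point of $\partial Graph(K)$ is a delta-point of strict egress.

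Getting this correspondence of walls and the sign bookkeeping exactly right — so that lower/upper and $B$/$C$ and the strict inequalities all match up after the reflection — is the main obstacle; everything else is a routine transport of regularity hypotheses through the duality lemmas. Once the correspondence is established, I would apply Theorem~\ref{th:Diblik} to $f_u$, $K$ on the time scale $\T^{\star}$ at the dual initial point $s_0:=-t_0$: it yields $\overline{x}\in K(s_0)$ and a solution $x=\overline{x}(\cdot)$ of $x^{\hat\Delta}(s)=f_u(s,x(s))$, $x(s_0)=\overline{x}$, with $(s,\overline{x}(s))\in Graph(K)$ for all $s\in\T^{\star}$.

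Finally I would dualize back. Set $\overline{y}(t):=\overline{x}^{\star}(t)=\overline{x}(-t)$ and $\overline{y}:=\overline{x}$; since $\overline{x}\in K(s_0)=K(-t_0)$, by construction $\overline{x}=\overline{y}\in P(t_0)$. By Lemma~\ref{lemmaderi} applied to $\overline{x}:\T^{\star}\to\R^n$, the dual $\overline{y}=\overline{x}^{\star}$ is nabla differentiable on $\T$ with $\overline{y}^{\nabla}(t)=-(\overline{x}^{\hat\Delta})^{\star}(t)=-\overline{x}^{\hat\Delta}(-t)=-f_u(-t,\overline{x}(-t))=g_v(t,\overline{x}(-t))=g_v(t,\overline{y}(t))$, so $\overline{y}$ solves $y^{\nabla}(t)=g_v(t,y(t))$ with $y(t_0)=\overline{y}$. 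Moreover $(t,\overline{y}(t))\in Graph(P)$ for every $t\in\T$ because $(-t,\overline{x}(-t))\in Graph(K)$ translates precisely to $\overline{y}(t)=\overline{x}(-t)\in K(-(-t))=\ldots$, i.e.\ to membership in $\overline{P(t)}$ combined with the strict inequalities, using $\overline{K(s)}=\overline{P(-s)}$. This completes the proof.
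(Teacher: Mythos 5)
Your overall strategy --- dualize the data to $\T^{\star}$, apply Theorem~\ref{th:Diblik} there, and dualize back --- is exactly the route the paper takes, and your transport of the regularity hypotheses and your final dualization of the solution are fine. The proof breaks down, however, precisely at the step you yourself flag as ``the main obstacle'': the claimed exchange of walls under the reflection $s=-t$ is false. The reflection acts only on the time variable; the state is untouched, so a point $N$ with $y_i=\gamma_i(t)$ (the smaller of the two bounds of $P(t)$) corresponds to $M=(s,\dots,b_i^{\star}(s),\dots)$ with $x_i=\gamma_i^{\star}(s)=b_i^{\star}(s)$, which is the \emph{lower} wall of $K(s)$ since $b_i^{\star}<c_i^{\star}$; it cannot lie in $Graph(K)^i_C$ unless $b_i^{\star}(s)=c_i^{\star}(s)$. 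Your first, correct computation therefore stands: the dual of \eqref{ineq:1} is $f_i(M)>(b_i^{\star})^{\hat\Delta}(s)$ at a $B$-point (note also that the derivative that appears is that of $b_i^{\star}=\gamma_i^{\star}$, not of $c_i^{\star}$, so the inequality you end up asserting is not even between the right quantities), and this is the \emph{reverse} of the delta strict-egress inequality $f_i(M)<(b_i^{\star})^{\hat\Delta}(s)$. The same happens on the $\beta_i$-wall. This is not a bookkeeping accident but a structural fact: time reversal converts egress into ingress, so the hypothesis of Theorem~\ref{th:Diblik:nabla} dualizes to ``every boundary point of $Graph(K)$ is a point of strict \emph{ingress}'', to which Theorem~\ref{th:Diblik} does not apply. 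At this point the argument collapses; to complete it one would need either a forward-invariance (all-ingress) result on $\T^{\star}$, or a direct nabla analogue of Wa\.zewski's retract argument.

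For what it is worth, the paper's own proof meets the same obstruction and resolves it no more convincingly: it forces the inequalities to match by setting $b_i=\beta_i^{\star}$, i.e., by declaring the dual of the \emph{upper} bound of $P$ to be the \emph{lower} bound of the dual set, and asserts $\beta_i^{\star}(t)<\gamma_i^{\star}(t)$ --- which contradicts the standing assumption $\beta_i>\gamma_i$ and would make the dual constraint set empty. So your instinct that the wall/sign correspondence is the crux is exactly right, but the resolution you propose (and the one in the paper) does not hold up; the gap is real, not merely notational.
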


\begin{proof}
Let function $g_v:\T\times\R^n \rightarrow \R^n$
be the right-hand side of equation \eqref{eq:nabla:0}.
Then it is a continuous function and, moreover,
it is ld-continuous, bounded, and Lipschitz on a closed set
$Z_t\subseteq\T\times\R^n$, where $[t-a,t+a]\times P(t)\subset Z_t$
for $a>0$ and $P(t)=\{y\in\R^n:\beta_i(t)> y_i>\gamma_i(t)$,
$i=1, \ldots, n\}$ with $\beta_i,\gamma_i:\mathbb{T}
\rightarrow \mathbb{R}$, $i=1, \ldots, n$,
nabla differentiable functions on $\T$ such that $\beta_i(t) > \gamma_i(t)$
for each $t\in \T$ and $i=1, \ldots, n$. By duality, there exists its dual
function $g_v^{\star}:\T^{\star}\times\R^n\rightarrow\R^n$ that is continuous.
Moreover, $g_v^{\star}$ is rd-continuous, bounded, and Lipschitz on a closed set
$Z^{\star}_t\subseteq\T^{\star}\times\R^n$ that is dual to $Z_t$ and such that
$[t-a,t+a]^{\star}\times P^{\star}(t)\subset Z^{\star}_t$ for $a>0$
and $P^{\star}(t):=\{x\in\R^n:\beta_i^{\star}(t)< x_i
<\gamma_i^{\star}(t),\, i=1, \ldots, n\}$ with
$\beta_i^{\star},\gamma_i^{\star}:\mathbb{T^{\star}}
\rightarrow \mathbb{R}$, $i=1, \ldots, n$,
delta differentiable functions on $\T^{\star}$ such that
$\beta_i^{\star}(t) < \gamma_i^{\star}(t)$
for each $t\in \T^{\star}$ and $i=1, \ldots, n$.
By assumption, every point $N\in \partial Graph(P)$ is the nabla-point
of strict egress for the set $Graph(P)$ with respect
to system \eqref{eq:nabla:0}. By duality,
one can show that every point $N^{\star}\in \partial Graph(P^\star)$
is a delta-point of strict egress for the set $Graph(P^{\star})$
with respect to system \eqref{eq:0}. Indeed, we have
$(y_i(t)-\beta_i(t))^{\nabla}>0$ and
$-(y^{\star}_i)^{\hat{\Delta}}(-t)+(\beta_i^{\star})^{\hat{\Delta}}(-t)>0$.
Putting $x_i=y^{\star}_i$,  $b_i=\beta_i^{\star}$, $s=-t$,
where $s\in\T^{\star}$ and $t\in\T$, we get
$-x^{\hat{\Delta}}_i(s)+b^{\hat{\Delta}}_i(s)>0$,
$-f_i(N^{\star})+b^{\hat{\Delta}}_i(s)>0$
and $f_i(N^{\star})<b^{\hat{\Delta}}_i(s)$,
for $i=1, \ldots, n$. Analogously, it can be shown for functions
$\gamma_i$ and their dual functions $c_i$, $i=1, \ldots, n$.
Since all assumptions of Theorem~\ref{th:Diblik} are fulfilled, there
exists a value $\overline{x}\in P^{\star}(t_0)$ such that the initial value problem
$x^\Delta(t) = f_u(t,x(t))$, $x(t_0)=\overline{x}$, has a solution $x=\overline{x}(t)$
satisfying $(t,\overline{x}(t))\in Graph(P^{\star})$ for every $t\in\T^{\star}$.
Now, let us take the dual object  $\overline{x}^{\star}$ that is equal to
$\overline{x}^{\star}={\overline{y}^{\star}}^{\star}=\overline{y}$ and we have
$\overline{y}\in P(t_0)$ for $t_0\in\T$. Hence,
the initial value problem $y^\nabla(t)= g_v(t,y(t))$, $y(t_0)=\overline{y}$,
has a solution $y=\overline{y}(t)$ satisfying
$(t,\overline{y}(t))\in Graph(P)$ for every $t\in\T$.
\end{proof}

To prove existence of solutions to the nabla differential inclusion \eqref{eq:nabla:3},
we need the dual analogous of Theorem~\ref{th:Del:Ewa}.

\begin{theorem}[The nabla Filippov theorem on time scales]
\label{th:Del:Ewa:nabla}
An absolutely ld-continuous function $y : [t_0,t_1]_{\T}\rightarrow\R^n$ is a
trajectory of \eqref{eq:nabla:1} if and only if
it satisfies \eqref{eq:nabla:3} almost everywhere.
\end{theorem}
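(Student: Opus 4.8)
The plan is to mirror the proof of Theorem~\ref{th:Diblik:nabla}: transport the statement to the dual time scale $\T^{\star}$, invoke the already-established delta Filippov theorem (Theorem~\ref{th:Del:Ewa}), and dualize back. First I would recall that, by construction, a trajectory $y=\psi(t_0,\cdot,y_0,v):[t_0,t_1]_{\T}\to\R^n$ of the nabla control system \eqref{eq:nabla:1} is precisely an absolutely ld-continuous solution of $y^{\nabla}(t)=g_v(t,y(t))$ with $y(t_0)=y_0$, $y(t)\in P(t)$, for some admissible control $v$ with $v(t)\in V$ a.e. Using the dual function $y^{\star}:\T^{\star}\to\R^n$, Lemma~\ref{lemmaderi} gives $y^{\nabla}(t)=-\bigl(y^{\star}\bigr)^{\hat\Delta}(-t)$, and by Lemma~\ref{lemmacontder} (together with Lemma~\ref{lemmacont}) $y$ is absolutely ld-continuous on $[t_0,t_1]_{\T}$ if and only if $y^{\star}$ is absolutely rd-continuous on the dual interval $[-t_1,-t_0]_{\T^{\star}}$. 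Likewise the dual data $g^{\star}$, $V$, and $P^{\star}$ satisfy exactly the hypotheses required to apply Theorem~\ref{th:Del:Ewa} on $\T^{\star}$, with the sign-reversed vector field $\tilde f(s,x,v):=-g^{\star}(s,x,v)$ (equivalently, replacing $v(t)$ by $v^{\star}(s)$).

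The key steps, in order, are: (1) given a trajectory $y$ of \eqref{eq:nabla:1}, form $x:=y^{\star}$ and check that $x$ is absolutely rd-continuous and solves $x^{\hat\Delta}(s)=\tilde f(s,x(s),v^{\star}(s))=\tilde f_u(s,x(s))$ on $\T^{\star}$ with $x(s)\in P^{\star}(s)$, so that $x$ is a trajectory of the dual delta control system; (2) apply Theorem~\ref{th:Del:Ewa} to conclude $x^{\hat\Delta}(s)\in \tilde F(s,x(s)):=\{\tilde f(s,x,v):v\in V\}$ for almost every $s$; (3) translate the dual set-valued inclusion back, noting that $\tilde F(s,x)=-G^{\star}(s,x)$, and use $x^{\hat\Delta}(s)=-y^{\nabla}(-s)$ to recover $y^{\nabla}(t)\in G(t,y(t))$ a.e.\ on $[t_0,t_1]_{\T}$; (4) reverse every implication — the arguments are manifestly biconditional because duality is an involution ($(\cdot)^{\star\star}=\mathrm{id}$) and Theorem~\ref{th:Del:Ewa} is itself an "if and only if" statement — so an absolutely ld-continuous $y$ satisfying \eqref{eq:nabla:3} a.e.\ gives, via $y^{\star}$, an absolutely rd-continuous function satisfying \eqref{eq:2} a.e.\ on $\T^{\star}$, hence a trajectory there, hence (dualizing once more) a trajectory of \eqref{eq:nabla:1}.

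The main obstacle I anticipate is bookkeeping with the "almost everywhere" qualifier under the time-scale duality: one has to confirm that the $\Delta$-measure on $\T^{\star}$ corresponds to the $\nabla$-measure on $\T$ under $s\mapsto -s$, so that a property holding $\hat\Delta$-a.e.\ on $[-t_1,-t_0]_{\T^{\star}}$ pulls back to a property holding $\nabla$-a.e.\ on $[t_0,t_1]_{\T}$, and that "absolutely rd-continuous'' dualizes to "absolutely ld-continuous'' at the level of the full regularity class (not merely pointwise differentiability). These facts follow from the measure-theoretic counterparts of Lemmas~\ref{lemmacont}--\ref{lemmacontder}; I would state them as a short remark or simply cite \cite{C:10,P:T:10} rather than reprove them. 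Everything else is a routine chase through the dictionary already assembled in Section~\ref{sec:prelim}, exactly as in the proof of Theorem~\ref{th:Diblik:nabla}, and the argument is symmetric, which automatically yields both directions of the equivalence.
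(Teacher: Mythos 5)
Your proposal follows essentially the same route as the paper's own proof: dualize to $\T^{\star}$, invoke the delta Filippov theorem (Theorem~\ref{th:Del:Ewa}), and dualize back, using the involutive character of $(\cdot)^{\star}$ to obtain both directions. If anything, you are more careful than the published argument about the sign conventions from Lemma~\ref{lemmaderi} and about how the ``almost everywhere'' qualifier and the absolute-continuity classes transport under duality, but the underlying idea is identical.
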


\begin{proof}
If there exists a solution to \eqref{eq:nabla:1}, then it is obvious that it is also
a solution to \eqref{eq:nabla:3}. Let us show, using duality, that
the converse is also true. Assume that there exists an absolutely ld-continuous
function $y : [t_0,t_1]_{\T}\rightarrow\R^n$ solution to \eqref{eq:nabla:3}.
Then there exists also its dual function $x = y^{\star}$, namely
$x : [-t_1,-t_0]_{\T^\star}\rightarrow\R^n$, that is rd-continuous and solution
to equation $x^{\hat{\Delta}}(s)=f(s,x(s),u(s))$, $s\in[-t_1,-t_0]_{\T^{\star}}$,
satisfying $x(s_0)=x_0$ for $s_0\in[-t_1,-t_0]_{\T^\star}$. By Theorem~\ref{th:Del:Ewa},
we know that such a function is a solution to \eqref{eq:2} if and only if it is a solution
to \eqref{eq:00}. Again, by duality, we know that there exists a dual function
to $x$, namely $x^{\star}=y$, $y:[t_0,t_1]_{\T}\rightarrow\R^n$,
such that it is a solution to equation $y^{\nabla}(t)=g(t,y(t),v(t))$, $t\in[t_0,t_1]_{\T}$,
satisfying initial condition $y(t_0)=y_0$ for $t_0\in\T$.
\end{proof}

\begin{theorem}[Existence of solutions to the nabla differential inclusion \eqref{eq:nabla:3}]
\label{th:viab:nabla}
Let $P:\T\twoheadrightarrow\R^n$, $P(t)=\{y\in\R^n : \beta_i(t)>y_i>\gamma_i(t),\, i=1, \ldots, n\}$
with $\beta_i,\gamma_i:\mathbb{T} \rightarrow \mathbb{R}$, $i=1, \ldots, n$,
nabla differentiable functions on $\T$ such that $\beta_i(t) > \gamma_i(t)$
for each $t\in \T$. If every point $N\in \partial Graph(P)$ is a nabla-point
of strict egress for the set $Graph(P)$ with respect to system \eqref{eq:nabla:0},
then there exists a value $\overline{y}\in P(t_0)$ such that the initial value problem
$y^{\nabla}(t)\in G(t,y(t))$, $y(t_0)=\overline{y}$,
has a solution $y=\overline{y}(t)$ satisfying
$(t,\overline{y}(t))\in Graph(P)$ for every $t\in[t_0,t_1]_\T$.
\end{theorem}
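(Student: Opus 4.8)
The plan is to combine the two nabla results that have just been established, namely Theorem~\ref{th:Diblik:nabla} (existence of a viable solution for the nabla \emph{equation}~\eqref{eq:nabla:0}) and Theorem~\ref{th:Del:Ewa:nabla} (the nabla Filippov theorem relating trajectories of the control system~\eqref{eq:nabla:1} to solutions of the inclusion~\eqref{eq:nabla:3}). The strcategy is essentially a two-line deduction: first produce a viable trajectory of the equation, then reinterpret it as a trajectory of the inclusion.

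First I would invoke Theorem~\ref{th:Diblik:nabla}. Its hypotheses are \emph{verbatim} the hypotheses of the present statement: $P$ has the same box form with nabla differentiable $\beta_i,\gamma_i$ satisfying $\beta_i(t)>\gamma_i(t)$, and every point $N\in\partial Graph(P)$ is a nabla-point of strict egress for $Graph(P)$ with respect to~\eqref{eq:nabla:0}. (The auxiliary closed sets $Z_t$ on which $g_v$ is ld-continuous, bounded and Lipschitz were fixed once and for all in the standing assumptions of Section~\ref{sec:mr}, so they are available here too.) Theorem~\ref{th:Diblik:nabla} therefore yields a value $\overline{y}\in P(t_0)$ and a solution $y=\overline{y}(t)$ of the initial value problem $y^\nabla(t)=g_v(t,y(t))$, $y(t_0)=\overline{y}$, with $(t,\overline{y}(t))\in Graph(P)$ for every $t\in\T$, in particular for every $t\in[t_0,t_1]_\T$.

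Second I would pass from the equation to the inclusion. For the fixed control value $v$ used to form $g_v$ we have $g_v(t,y)=g(t,y,v)\in G(t,y)$ by the definition $G(t,y)=\{g(t,y,v):v\in V\}$, so the solution $\overline{y}$ of $y^\nabla(t)=g_v(t,y(t))$ automatically satisfies $y^\nabla(t)\in G(t,y(t))$ almost everywhere, with $y(t_0)=\overline{y}$. Thus $\overline{y}$ solves the nabla differential inclusion~\eqref{eq:nabla:3} with the required initial value, and it stays in $P(t)$ on $[t_0,t_1]_\T$ by the previous paragraph. (If one prefers to phrase the inclusion side through trajectories of the control system~\eqref{eq:nabla:1}, Theorem~\ref{th:Del:Ewa:nabla} gives the equivalence: the ld-continuous $\overline{y}$ is a trajectory of~\eqref{eq:nabla:1} iff it satisfies~\eqref{eq:nabla:3} a.e., and since $\overline{y}$ is even a classical solution for the particular constant control $v$, both descriptions apply.) This completes the argument.

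The main obstacle, to the extent there is one, is bookkeeping rather than mathematics: one must make sure that the \emph{same} trajectory delivered by Theorem~\ref{th:Diblik:nabla} — which a priori lives on all of $\T$ — is the one declared to solve the inclusion on $[t_0,t_1]_\T$, and that the regularity class (absolutely ld-continuous) in which Theorem~\ref{th:Del:Ewa:nabla} is stated is met, which it is since a nabla-differentiable solution of~\eqref{eq:nabla:0} is absolutely ld-continuous on the compact interval. No genuinely new estimate or construction is needed; the content was already spent in proving Theorems~\ref{th:Diblik:nabla} and~\ref{th:Del:Ewa:nabla}.
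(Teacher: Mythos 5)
Your proposal is correct and follows essentially the same route as the paper: first apply Theorem~\ref{th:Diblik:nabla} to obtain a viable solution of the nabla equation \eqref{eq:nabla:0}, then pass to the inclusion \eqref{eq:nabla:3} by noting that a solution for the fixed control $v$ is a trajectory of \eqref{eq:nabla:1} and hence satisfies the inclusion (the paper phrases this last step via Theorem~\ref{th:Del:Ewa:nabla}, while you also note the direct observation $g_v(t,y)\in G(t,y)$, which is just the easy direction of that theorem).
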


\begin{proof}
Since, by assumptions, $P(t)=\{y\in\R^n:\beta_i(t)>y_i>\gamma_i(t),\, i=1, \ldots, n\}$
with $\beta_i,\gamma_i:\mathbb{T} \rightarrow \mathbb{R}$, $i=1, \ldots, n$,
nabla differentiable functions on $\T$ such that $\beta_i(t) > \gamma_i(t)$
for each $t\in \T$ and every point $N\in \partial Graph(P)$ is a nabla point
of strict egress for the set $Graph(P)$ with respect to system \eqref{eq:nabla:0},
one has guarantee of existence of at least one solution to \eqref{eq:nabla:0}
defined on $[t_0,t_1]_{\T}$ such that $(t,y(t))\in Graph(P)$
(or, equivalently, $y(t)\in P(t)$ for each $t\in\T$). It means that problem
$\eqref{eq:nabla:0}$ has at least one viable solution. Since for a fixed
$v$ every solution to \eqref{eq:nabla:0} is also a solution to \eqref{eq:nabla:1}
(see, \textrm{e.g.}, \cite{Bressan+Piccoli}) we see by Theorem~\ref{th:Del:Ewa:nabla}
(the nabla Filippov theorem on time scales) that any absolutely ld-continuous
function $x:[t_0,t_1]_{\T}\rightarrow \R^n$ is a trajectory of \eqref{eq:nabla:1}
if and only if it satisfies \eqref{eq:nabla:3} almost everywhere.
\end{proof}

We can easily obtain an analogous result for delta differential inclusions.

\begin{theorem}[Existence of solutions to the delta differential inclusion \eqref{eq:2}]
\label{th:viab:delta}
Let $K:\T\twoheadrightarrow\R^n$, $K(t)=\{x\in\R^n : b_i(t)< x_i< c_i(t),\, i=1, \ldots, n\}$
with $b_i,c_i:\mathbb{T} \rightarrow \mathbb{R}$, $i=1, \ldots, n$,
delta differentiable functions on $\T$ such that $b_i(t) < c_i(t)$ for each $t\in \T$.
If every point $M\in \partial Graph(K)$ is a delta-point of strict egress for the set
$Graph(K)$ with respect to system \eqref{eq:0}, then there exists
a value $\overline{x}\in K(t_0)$ such that the initial value problem
$x^\Delta(t)\in F(t,x(t))$, $x(t_0)=\overline{x}$,
has a solution $x=\overline{x}(t)$ satisfying
$(t,\overline{x}(t))\in Graph(K)$ for every $t\in[t_0,t_1]_\T$.
\end{theorem}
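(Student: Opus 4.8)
The plan is to combine the two delta-calculus results already at our disposal --- Theorem~\ref{th:Diblik} for the dynamic equation and Theorem~\ref{th:Del:Ewa} (Filippov's theorem) for passing between the control system and the inclusion --- exactly as in the proof of the nabla counterpart Theorem~\ref{th:viab:nabla}, but now without invoking duality, since we are already working in the forward setting.

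First I would fix an admissible control $u\in\mathcal{U}$ and consider the dynamic system \eqref{eq:0}, $x^\Delta(t)=f_u(t,x(t))$, whose right-hand side $f_u$ is, by the standing assumptions of Section~\ref{subsec:aux}, continuous and, on each closed set $S_t$ with $[t-a,t+a]\times\overline{K(t)}\subset S_t$, rd-continuous, bounded and Lipschitz. Since $K$ has the prescribed product-of-intervals form with $b_i,c_i$ delta differentiable and $b_i(t)<c_i(t)$, and since by hypothesis every $M\in\partial Graph(K)$ is a delta-point of strict egress for $Graph(K)$ with respect to \eqref{eq:0}, all hypotheses of Theorem~\ref{th:Diblik} are met. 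Applying it yields a value $\overline{x}\in K(t_0)$ and a solution $x=\overline{x}(t)$ of the initial value problem $x^\Delta(t)=f_u(t,x(t))$, $x(t_0)=\overline{x}$, with $(t,\overline{x}(t))\in Graph(K)$ for every $t\in\T$; restricting to $[t_0,t_1]_\T$ gives a viable solution of \eqref{eq:0} on that interval.

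Next I would observe that, for the fixed control $u$, this solution of \eqref{eq:0} is a trajectory of the control system \eqref{eq:00} in the sense defined above (every solution of $x^\Delta=f_u(t,x)$ corresponding to a fixed $u$ is such a trajectory; cf. \cite{Bressan+Piccoli}), and that it is absolutely rd-continuous because it solves a dynamic equation with bounded rd-continuous right-hand side. Then Theorem~\ref{th:Del:Ewa}, the delta Filippov theorem on time scales, applies: an absolutely rd-continuous $x:[t_0,t_1]_\T\rightarrow\R^n$ is a trajectory of \eqref{eq:00} if and only if it satisfies the inclusion \eqref{eq:2}, $x^\Delta(t)\in F(t,x(t))$, almost everywhere. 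Consequently $\overline{x}(t)$ solves the initial value problem $x^\Delta(t)\in F(t,x(t))$, $x(t_0)=\overline{x}$, and keeps $(t,\overline{x}(t))\in Graph(K)$ on $[t_0,t_1]_\T$, which is the desired conclusion.

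The argument has essentially no obstacle --- it is the forward mirror of Theorem~\ref{th:viab:nabla}. The only points requiring a little care are checking that the regularity hypotheses of Theorem~\ref{th:Diblik} and of Theorem~\ref{th:Del:Ewa} are genuinely satisfied by $f_u$ and by the produced solution (rd-continuity, boundedness and the Lipschitz property on the sets $S_t$, and absolute rd-continuity of $\overline{x}(t)$), and verifying that restricting the global-in-$t$ solution from Theorem~\ref{th:Diblik} to $[t_0,t_1]_\T$ does not destroy viability. Alternatively, one could route the whole proof through Caputo's duality --- dualize $K$ to $P:=K^\star$, apply Theorem~\ref{th:viab:nabla} on $\T^\star$, and dualize back via Lemmas~\ref{lemmacont}--\ref{lemmacontder} --- but the direct route above is the shorter one.
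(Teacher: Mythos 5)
Your proposal is correct and is exactly the argument the paper intends: the paper's proof of Theorem~\ref{th:viab:delta} simply says it is analogous to that of Theorem~\ref{th:viab:nabla}, which combines the strict-egress existence theorem for the dynamic equation with the Filippov theorem to pass to the inclusion, and your forward-setting mirror of that proof (Theorem~\ref{th:Diblik} followed by Theorem~\ref{th:Del:Ewa}) is precisely the intended ``analogous'' route.
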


\begin{proof}
The proof is analogous to the proof of Theorem~\ref{th:viab:nabla}.
\end{proof}


\section{Example}
\label{sec:ex}

Let us consider the control system
\begin{equation}
\label{ex:1}
\begin{cases}
y_1^\nabla=g_1(t, y_1,y_2,v_1,v_2):=2t^5y_1+\cos(ty_2)+y_2^8+v_1,\\
y_2^\nabla=g_2(t, y_1,y_2,v_1,v_2):=2t^6y_2+\cos(ty_1)+y_1^7+v_2,
\end{cases}
\end{equation}
defined for each $t\in[t_0,t_1]_\T$, where $t_0,t_1\in\R$, $1 \le t_0 <t_1$.
Assuming that the controls satisfy the constraint
\begin{equation}
\label{eq:constr}
v_1^2+v_2^2\leq 1,
\end{equation}
the control system \eqref{ex:1} is equivalent to the differential inclusion
\begin{equation}
\label{ex:3}
(y_1^\nabla,y_2^\nabla)\in F(y_1,y_2)
= \{(2t^5y_1+\cos(ty_2)+y_2^8+v_1,2t^6y_2
+\cos(ty_1)+y_1^7+v_2):v_1^2+v_2^2\leq1\}.
\end{equation}
Having in mind \eqref{eq:constr}, let us fix $v=(v_1,v_2)=(\frac 12,\frac 12)$.
Then the control system \eqref{ex:1} takes the form
\begin{equation}
\label{ex:2}
\begin{cases}
y_1^\nabla=g_{v1}(t, y_1,y_2):=2t^5y_1+\cos(ty_2)+y_2^8+\frac 12,\\
y_2^\nabla=g_{v2}(t, y_1,y_2):=2t^6y_2+\cos(ty_1)+y_1^7+\frac 12.
\end{cases}
\end{equation}
We will show that there exists an initial value $\bar{y}=(\bar{y}_1,\bar{y}_2)$,
$\bar{y}_i\in(-t^{-1},t^{-1})$ for $i=1,2$,
such that the initial value problem
\eqref{ex:2}, $y(t_0)=\bar{y}$, has a solution
$y=\bar{y}(t)=(\bar{y}_1(t),\bar{y}_2(t))$
satisfying \eqref{ex:3} almost everywhere with $|\bar{y}_i(t)|<t^{-1}$, $i=1,2$,
$t\in[t_0,t_1]_{\T}$. Let us define a set-valued map
$P(t):=\{(y_1,y_2)\in\R^2:\beta_i(t)>y_i>\gamma_i(t),i=1,2\}$,
where $\beta_i(t)= t^{-1}$ and $\gamma_i(t)=-t^{-1}$,
$i=1,2$, are nabla differentiable functions. We will verify that every point
$N\in \partial Graph(P)$ is a nabla-point of  strict egress for the set $Graph(P)$
with respect to system \eqref{ex:2}. Indeed, for $(t,\beta_1(t),y_2)\in Graph(P)^1_V$
and $(t,y_1,\beta_2(t))\in Graph(P)^2_V$, where $Graph(P)^i_V:=\{(t,y)\in Graph(P):V_i(t,y)=0\}$
and $V_i(t,y)=-y_i+t^{-1}$, $i=1,2$, we get
\begin{equation*}
g_{v1}(t,\beta_1(t),y_2)-\beta^\nabla_1(t)
= 2t^5\beta_1(t)+\cos(ty_2)+y_2^8+\frac 12-(t^{-1})^\nabla
\geq 2t^4-1-t^{-8}+\frac 12-(t^{-1})^\nabla>0,
\end{equation*}
\begin{equation*}
g_{v2}(t,y_1,\beta_2(t))-\beta^\nabla_2(t)
= 2t^6\beta_2(t)+\cos(ty_1)+y_1^7+\frac 12-(t^{-1})^\nabla
\geq 2t^5-1-t^{-7}+\frac 12-(t^{-1})^\nabla>0,
\end{equation*}
where $(t^{-1})^\nabla$ is negative because the function $t^{-1}$
is decreasing for every $t\in\T$. Thus, inequalities \eqref{ineq:2} hold.
Analogously, one can show that inequalities \eqref{ineq:1} hold for
$(t,\gamma_1(t),y_2)\in Graph(P)^1_W$ and $(t,y_1,\gamma_2(t))\in Graph(P)^2_W$,
where $Graph(P)^i_W:=\{(t,y)\in Graph(P):W_i(t,y)=0\}$
and $W_i(t,y)=y_i+t^{-1}$ for $i=1,2$. By Definition~\ref{def:nabla},
every point $N\in\partial Graph(P)$ is a nabla-point of strict egress for the set $Graph(P)$,
thus all assumptions of Theorem~\ref{th:viab:nabla} are fulfilled.
It implies that there exists an initial value $\bar{y}=(\bar{y}_1,\bar{y}_2)$,
$\bar{y}_i\in(-t^{-1},t^{-1})$ for $i=1,2$, such that the initial value problem
\eqref{ex:3}, $y(t_0)=\bar{y}$, has a solution $y=\bar{y}(t)$
satisfying $(t,y(t))\in Graph(P)$, $t\in[t_0,t_1]_{\T}$.


\section*{Acknowledgements}

Work supported by {\it FEDER} funds through
{\it COMPETE} --- Operational Programme Factors of Competitiveness
(``Programa Operacional Factores de Com\-pe\-ti\-ti\-vi\-da\-de'')
and by Portuguese funds through the
{\it Center for Research and Development
in Mathematics and Applications} (University of Aveiro)
and the Portuguese Foundation for Science and Technology
(``FCT --- Funda\c{c}\~{a}o para a Ci\^{e}ncia e a Tecnologia''),
within project PEst-C/MAT/UI4106/2011
with COMPETE number FCOMP-01-0124-FEDER-022690.
Girejko was also supported by FCT post-doc
fellowship SFRH/BPD/48439/2008 and by
Bialystok University of Technology grant S/WI/2/2011;
Torres by project PTDC/MAT/113470/2009.



\bigskip


\end{document}